\definecolor{webgreen}{rgb}{0,.5,0}
\definecolor{webbrown}{rgb}{.6,0,0}
\newcommand{\seqnum}[1]{\href{http://www.research.att.com/cgi-bin/access.cgi/as/~njas/sequences/eisA.cgi?Anum=#1}{\underline{#1}}}
\begin{document}
\begin{center}
\vskip 1cm {\LARGE\bf A characterization of regular tetrahedra in
$\Bbb Z^3$ }

\vskip 1cm \large Eugen J. Ionascu\\
\vskip 1cm

{\small
Department of Mathematics\\
Columbus State University\\
Columbus, GA 31907, US \\
{\it Honorific Member of the \\ Romanian Institute of Mathematics
``Simion Stoilow"} \\
\href{mailto:ionascu_eugen@colstate.edu}{\tt ionascu\_eugen@colstate.edu}} \\
\end{center}

\centerline{December $23^{rd}$, 2007}

\vskip .2in

\begin{abstract}
In this note we characterize all regular tetrahedra whose vertices
in $\mathbb{R}^3$ have integer coordinates. The main result is a
consequence of the characterization of all equilateral triangles
having integer coordinates (\cite{rceji}). Previous work on this
topic began in \cite{eji1}. We will use this characterization to
point out some corollaries. The number of such tetrahedra whose
vertices are in the finite set $\{0,1,2,...,n\}^3$, $n\in\mathbb N$,
is related to the sequence A103158 in the Online Encyclopedia of
Integer Sequences (\cite{OL}).
\end{abstract}

\newtheorem{theorem}{Theorem}[section]
\newtheorem{proposition}{Proposition}[section]
\newtheorem{corollary}{Corollary}[section]
\newtheorem{lemma}{Lemma}[section]
\newtheorem{definition}{Definition}[section]
\newtheorem{problem}{Problem}[section]
\section{Introduction}

In this paper we give a solution to the following system of
Diophantine equations:

\begin{equation}\label{selling}
\begin{cases}
\begin{array}{c}
  (x_a-x_b)^2+(y_a-y_b)^2+(z_a-z_b)^2 =(x_a-x_c)^2+(y_a-y_c)^2+(z_a-z_c)^2
  \\ \\
  (x_a-x_b)^2+(y_a-y_b)^2+(z_a-z_b)^2
  =(x_a-x_d)^2+(y_a-y_d)^2+(z_a-z_d)^2\\ \\
(x_a-x_b)^2+(y_a-y_b)^2+(z_a-z_b)^2
=(x_b-x_c)^2+(y_b-y_c)^2+(z_b-z_c)^2
  \\ \\
  (x_a-x_b)^2+(y_a-y_b)^2+(z_a-z_b)^2
  =(x_b-x_d)^2+(y_b-y_d)^2+(z_b-z_d)^2\\ \\
(x_a-x_b)^2+(y_a-y_b)^2+(z_a-z_b)^2
  =(x_c-x_d)^2+(y_c-y_d)^2+(z_c-z_d)^2
\end{array}
\end{cases}
\end{equation}
\noindent where $x_a$, $y_a$, $z_a$, $x_b$, $y_b$, $z_b$, $x_c$,
$y_c$, $z_c$, $x_d$, $y_d$, $z_d$ are in $\mathbb Z$. If
$A(x_a,y_a,z_a)$, $B(x_b,y_b,z_b)$, $C(x_c,y_c,z_c)$, and
$D(x_d,y_d,z_d)$ are considered in $\mathbb R^3$ the Diophantine
system (\ref{selling}) represents exactly the condition that makes
$ABCD$ a regular tetrahedron with vertices having integer
coordinates. Clearly (\ref{selling}) is invariant under translations
in $\mathbb R^3$ with vectors having integer coordinates. So,
without loss in generality we are interested in the case, $A=O$,
where $O$ is the origin. Then for each of the faces OBC, OCD and OBD
to be being equilateral triangles it is necessary and sufficient to
solve for instance the following system:

\begin{equation}\label{eqtr}
\begin{cases}
\begin{array}{c}
  x_b^2+y_b^2+z_b^2=x_c^2+y_c^2+z_c^2
  \\ \\
  x_b^2+y_b^2+z_b^2
  =(x_b-x_c)^2+(y_b-y_c)^2+(z_b-z_c)^2.
\end{array}
\end{cases}
\end{equation}

This problem was solved in \cite{rceji}. We recall the following
facts from \cite{rceji} and \cite{eji1}. Every equilateral triangle
in $\mathbb{R}^3$ whose vertices have integer coordinates, after a
translation that brings one of its vertices to the origin, must be
contained in a plane of a normal vector $(a,b,c)$ ($a,b,c\in \Bbb
Z$) satisfying the Diophantine equation

\begin{equation}\label{planeeq}
a^2+b^2+c^2=3d^2 \end{equation}

\noindent with $d\in \Bbb N$. Moreover, $a$, $b$, $c$, and $d$ can
be chosen so that the sides of such a triangle have length of the
form $d\sqrt{2(m^2-mn+n^2)}$ with $m$ and $n$ integers. A more
precise statement is the next parametrization of these triangles.

\begin{theorem}\label{generalpar} Let $a$, $b$, $c$, $d$ be odd
integers such that $a^2+b^2+c^2=3d^2$ and $gcd(a,b,c)=1$. Then for
every $m,n \in\mathbb{Z}$ the points $P(u,v,w)$ and $Q(x,y,z)$ whose
coordinates are given by
\begin{equation}\label{paramone}
\begin{cases}
u=m_um-n_un,\\
v=m_vm-n_vn,\\
w=m_wm-n_wn, \\
\end{cases}
\ \ \ \text{and} \ \ \
\begin{cases}
x=m_xm-n_xn,\\
y=m_ym-n_yn,\\
z=m_zm-n_zn,
\end{cases}\ \
\end{equation}
with
\begin{equation}\label{paramtwo}
\begin{array}{l}
\begin{cases}
m_x=-\frac{1}{2}[db(3r+s)+ac(r-s)]/q,\ \ \ n_x=-(rac+dbs)/q\\
m_y=\frac{1}{2}[da(3r+s)-bc(r-s)]/q,\ \ \ \ \  n_y=(das-bcr)/q\\
m_z=(r-s)/2,\ \ \ \ \ \ \ \ \ \ \ \ \ \ \ \  \ \ \ \ \  \ \ \ \ \ \ n_z=r\\
\end{cases}\\
\ \ \ \text{and}\ \ \\
\begin{cases}
m_u=-(rac+dbs)/q,\ \ \ n_u=-\frac{1}{2}[db(s-3r)+ac(r+s)]/q\\
m_v=(das-rbc)/q,\ \ \ \ \   n_v=\frac{1}{2}[da(s-3r)-bc(r+s)]/q\\
m_w=r,\ \ \ \ \ \ \ \ \ \ \ \ \ \ \ \ \ \ \  n_w=(r+s)/2\\
\end{cases}
\end{array}
\end{equation}
where $q=a^2+b^2$ and $(r,s)$ is a suitable solution of
$2q=s^2+3r^2$ that makes all the numbers in (\ref{paramtwo})
integers, together with the origin (O(0,0,0)) forms an equilateral
triangle in $\Bbb Z^3$ contained in the plane having equation
$${\cal
P}_{a,b,c}:=\{(\alpha,\beta,\gamma)\in \mathbb {R}|\
a\alpha+b\beta+c\gamma=0\}$$ and having sides-lengths equal to
$d\sqrt{2(m^2-mn+n^2)}$.

Conversely, there exists a choice of the integers $r$ and $s$ such
that given an arbitrary equilateral triangle in $\mathbb{R}^3$ whose
vertices, one at the origin and the other two having integer
coordinates and contained in the plane ${\cal P}_{a,b,c}$, then
there also exist integers $m$ and $n$ such that the two vertices not
at the origin are given by (\ref{paramone}) and (\ref{paramtwo}).
\end{theorem}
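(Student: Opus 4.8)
The plan is to pass to the lattice $L=\mathbb Z^3\cap\mathcal P_{a,b,c}$ and the $60^\circ$ rotation about the line $\mathbb R N$, where $N=(a,b,c)$. Since $|N|^2=3d^2$, on the plane $\mathcal P_{a,b,c}$ this rotation and its inverse are $R^{\pm1}(v)=\tfrac12 v\pm\tfrac1{2d}\,N\times v$. After translating $A$ to the origin, an equilateral triangle with integer vertices one of which is $O$ is exactly a triple $\{O,P,R^{-1}P\}$ with $P\in L':=\{v\in L:\ R^{-1}v\in L\}$ (relabel the two non-origin vertices if need be: if the second is $RX$ rather than $R^{-1}X$, replace the ordered pair $(X,Y)$ by $(Y,X)$). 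Write $\xi=(m_u,m_v,m_w)$, $\eta=(n_u,n_v,n_w)$, $\xi'=(m_x,m_y,m_z)$, $\eta'=(n_x,n_y,n_z)$, so that $P(m,n)=m\xi-n\eta$ and $Q(m,n)=m\xi'-n\eta'$.

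For the direct part everything is bilinear in $(m,n)$, so it suffices to verify the following finite list of identities, all polynomial in $a,b,c,d,q,r,s$ once one reduces modulo $a^2+b^2+c^2=3d^2$, $q=a^2+b^2$ and $2q=s^2+3r^2$: $N\cdot\xi=N\cdot\eta=0$; $|\xi|^2=|\eta|^2=2d^2$; $\xi\cdot\eta=d^2$; and $2d\,\xi'=d\,\xi-N\times\xi$, $2d\,\eta'=d\,\eta-N\times\eta$. Granting these, $\xi'=R^{-1}\xi$ and $\eta'=R^{-1}\eta$ (hence $\xi',\eta'$ again lie in $\mathcal P_{a,b,c}$ and have length $d\sqrt2$), $P(m,n)$ and $Q(m,n)$ lie in $\mathbb Z^3$ (integrality of (\ref{paramtwo}) being the standing hypothesis on $(r,s)$) and in $\mathcal P_{a,b,c}$, $Q(m,n)=R^{-1}P(m,n)$, and $|OP|^2=m^2|\xi|^2-2mn\,\xi\cdot\eta+n^2|\eta|^2=2d^2(m^2-mn+n^2)$; then $|OQ|=|OP|$ because $R^{-1}$ is an isometry of $\mathcal P_{a,b,c}$, and $|PQ|^2=|OP|^2+|OQ|^2-2\,OP\cdot OQ=2|OP|^2(1-\cos60^\circ)=|OP|^2$, so $OPQ$ is equilateral with the stated side length $d\sqrt{2(m^2-mn+n^2)}$. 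I expect these identities to be the tedious — but entirely mechanical — part of the write-up.

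For the converse, fix a solution $(r,s)$ of $2q=s^2+3r^2$ making (\ref{paramtwo}) integral (the existence of such a pair is among the facts recalled from \cite{rceji}). By the direct part $\xi,\eta$ are integer vectors lying in $L'$ (we showed $\xi',\eta'\in L$), and they are linearly independent since the angle between them is $60^\circ$; thus $\langle\xi,\eta\rangle$ is a finite-index sublattice of $L'$. I claim it equals $L'$; granting this, an arbitrary integer equilateral triangle in $\mathcal P_{a,b,c}$ with a vertex at $O$ is, as above, $\{O,P,R^{-1}P\}$ with $P\in L'=\langle\xi,\eta\rangle$, so $P=m\xi-n\eta$ and $R^{-1}P=m\xi'-n\eta'$ for integers $m,n$, which is exactly the assertion. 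To prove the claim it is enough to match the covolumes of the two rank-$2$ lattices: $\operatorname{vol}\langle\xi,\eta\rangle=|\xi|\,|\eta|\sin60^\circ=2d^2\cdot\tfrac{\sqrt3}{2}=d^2\sqrt3$, while $\operatorname{vol}(L)=|N|=d\sqrt3$ (the standard fact for the lattice orthogonal to a primitive integer vector, using $\gcd(a,b,c)=1$); so the claim reduces to the index identity $[L:L']=d$.

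The index identity is the heart of the matter. Choose a $\mathbb Z$-basis of $L$ and let $M\in M_2(\mathbb Z)$ be the matrix of $v\mapsto N\times v$ on $L$; since this map is $|N|=d\sqrt3$ times a $90^\circ$ rotation of $\mathcal P_{a,b,c}$, one has $M^2=-3d^2I$, hence $\operatorname{tr}M=0$ and $\det M=3d^2$, $R^{-1}$ is represented by $\tfrac1{2d}(dI-M)$, and $L'=\{v\in L:\ (dI-M)v\in 2dL\}=\ker\big(L/2dL\xrightarrow{\,dI-M\,}L/2dL\big)$, with $\det(dI-M)=d^2-d\operatorname{tr}M+\det M=4d^2$. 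Now because $a,b,c$ are odd and coprime, $N\equiv(1,1,1)\pmod 2$, and $L/2L$ is the plane $\{v_1+v_2+v_3=0\}\subset\mathbb F_2^3$, on which $v\mapsto N\times v$ acts as the identity; thus $M\equiv I\pmod 2$, and since $d$ is odd $dI-M\equiv 0\pmod 2$. If all entries of $dI-M$ were divisible by $4$, then $M\equiv dI\pmod 4$, forcing $M^2\equiv d^2I\pmod 8$, which contradicts $M^2=-3d^2I$ because $4d^2\not\equiv 0\pmod8$ for $d$ odd. Hence the Smith normal form of $dI-M$ is $\operatorname{diag}(2,2d^2)$, and counting kernels in $L/2dL\cong(\mathbb Z/2d)^2$ gives $[L:L']=\dfrac{(2d)^2}{\gcd(2,2d)\,\gcd(2d^2,2d)}=\dfrac{4d^2}{2\cdot 2d}=d$, as required. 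The main obstacle is precisely this last, $2$-adic, step: pinning the index to $d$ rather than $2d$, which is exactly where the oddness of $a,b,c,d$ is indispensable — without it $L'$ can properly contain $\langle\xi,\eta\rangle$ and the parametrization would omit half of the triangles. The only other input to be borrowed from \cite{rceji} is the solvability of $s^2+3r^2=2q$ by a pair $(r,s)$ rendering (\ref{paramtwo}) integral.
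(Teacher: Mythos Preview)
The paper does not prove Theorem~\ref{generalpar}; it is quoted from \cite{rceji} as background, so there is no in-paper argument to compare against and your attempt has to stand on its own.

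Your overall strategy is sound and rather elegant: reduce the converse to the lattice equality $\langle\xi,\eta\rangle=L'$, and prove that by matching $[L:\langle\xi,\eta\rangle]=d$ (from the covolumes) with $[L:L']=d$ (from the Smith normal form of $dI-M$). The direct-part identities you list do hold, and the reduction is correct. However, the Smith normal form step has a genuine gap. From $dI-M\equiv 0\pmod 2$ and $dI-M\not\equiv 0\pmod 4$ you may only conclude $v_2(e_1)=1$, i.e.\ $e_1=2m$ with $m$ odd; you have not excluded $m>1$. If some odd prime $p$ divided every entry of $dI-M$, then $e_1=2p$, $e_2=2d^2/p$, and your own kernel count would give
\[
[L:L']=\frac{(2d)^2}{\gcd(2p,2d)\,\gcd(2d^2/p,2d)}=\frac{4d^2}{2p\cdot 2d}=\frac{d}{p}\neq d,
\]
so $\langle\xi,\eta\rangle\subsetneq L'$ and the parametrization would miss triangles. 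Thus the inference ``hence the SNF is $\operatorname{diag}(2,2d^2)$'' is not justified as written.

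The repair is short. If an odd prime $p$ divides $e_1$ then $M\equiv dI\pmod p$; squaring and using $M^2=-3d^2I$ forces $4d^2\equiv 0\pmod p$, so $p\mid d$ and hence $M\equiv 0\pmod p$. That would mean $N\times v\equiv 0\pmod p$ for every $v\in L$. Since $\gcd(a,b,c)=1$, the lattice $L$ is a direct summand of $\mathbb Z^3$, so $L/pL$ embeds as a $2$-dimensional subspace of $\mathbb F_p^{\,3}$; but $N\times v=0$ in $\mathbb F_p^{\,3}$ only on the line $\mathbb F_p\bar N$, a contradiction. Hence $e_1=2$ exactly, the SNF is $\operatorname{diag}(2,2d^2)$, and your computation $[L:L']=d$ then goes through. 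With this one addition your argument is complete.
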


\begin{center}\label{figure0}
\epsfig{file=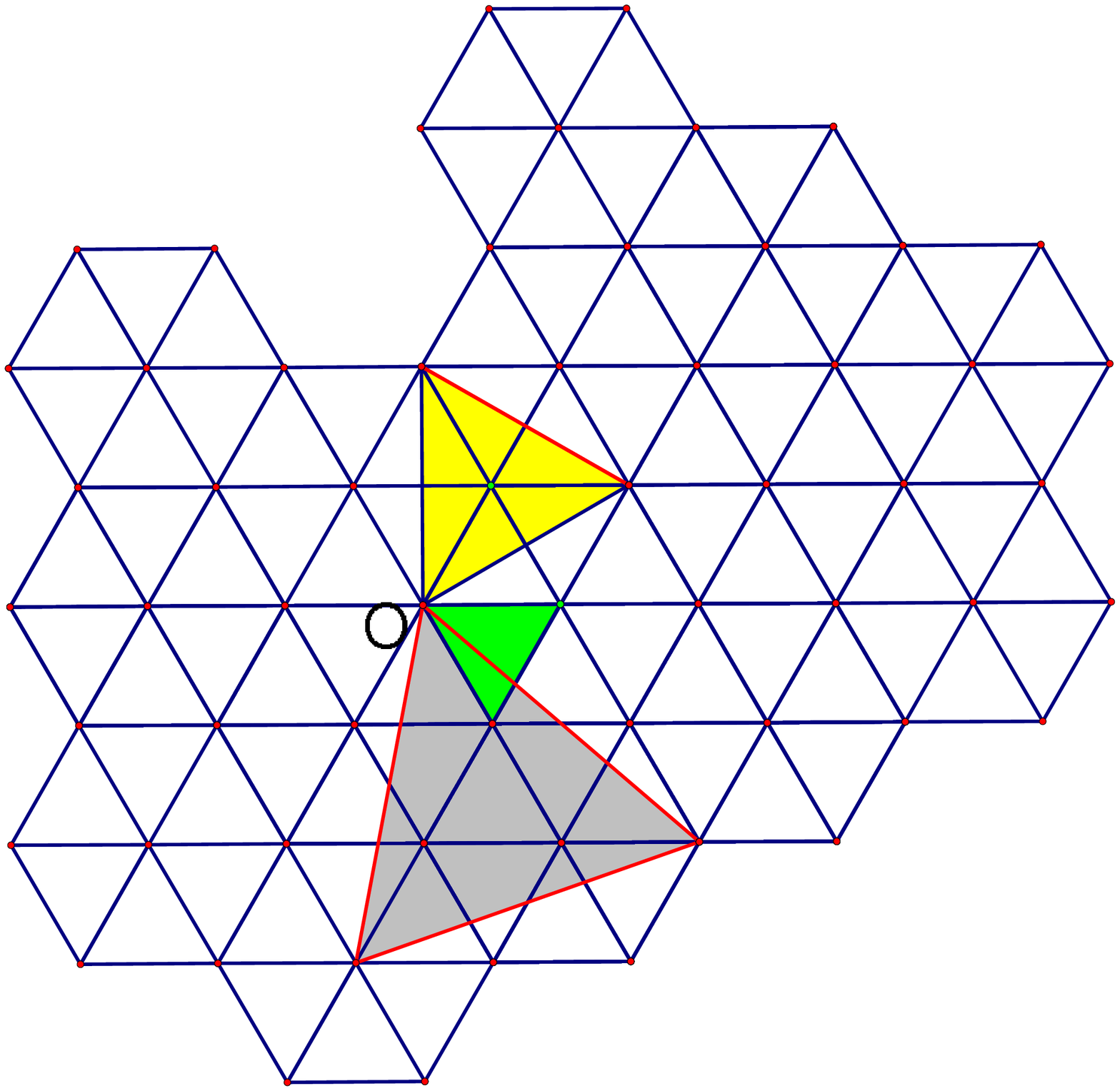,height=3in,width=3in}
\vspace{.1in}\\
{\it {\small Figure 1: Lattice generated by (\ref{paramone}) with
$m,n\in \mathbb Z$ \\ in the plane ${\cal
P}_{a,b,c}:=\{(\alpha,\beta,\gamma)\in \mathbb {R}|\
a\alpha+b\beta+c\gamma=0\}$}  } \vspace{.1in}
\end{center}

In Figure~1 we are showing a few triangles that can be obtained,
using (\ref{paramone}), for various values of $m$ and $n$. Out of
all these equilateral triangles only a few are faces of regular
tetrahedra with integer coordinates. The restriction comes from the
following characterization given as Proposition~5.3 in \cite{eji1}.

\begin{proposition}\label{tetrahedra}
A regular tetrahedron having side lengths $l$ and with integer
coordinates exists, if and only if $l=m\sqrt{2}$ for some $m\in
\mathbb N$.
\end{proposition}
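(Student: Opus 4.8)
The plan is to establish the two implications separately; the substantive content lies entirely in the ``only if'' direction, which I would prove by comparing two expressions for the volume of the tetrahedron.

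The ``if'' direction is immediate from an explicit construction. The four points $O=(0,0,0)$, $(1,1,0)$, $(1,0,1)$, $(0,1,1)$ are pairwise at distance $\sqrt2$, hence form a regular tetrahedron of edge $\sqrt2$ with integer coordinates; applying the dilation $x\mapsto mx$, which keeps all coordinates integral, yields a regular tetrahedron of edge $m\sqrt2$ for every $m\in\mathbb N$.

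For the ``only if'' direction, let $ABCD$ be a regular tetrahedron of edge length $l$ with $A,B,C,D\in\mathbb Z^3$. First, $l^2$ equals a squared Euclidean distance between two points of $\mathbb Z^3$, so $l^2=k$ for some $k\in\mathbb N$. Second, the volume of $ABCD$ is $\frac16\bigl|\det[\,B-A\ \ C-A\ \ D-A\,]\bigr|$; since the three column vectors lie in $\mathbb Z^3$ this determinant is an integer, and since the tetrahedron is nondegenerate the volume equals $N/6$ for some $N\in\mathbb N$. On the other hand the volume of a regular tetrahedron of edge $l$ is $\dfrac{l^{3}}{6\sqrt2}$. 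Equating these and clearing denominators gives $l^3=N\sqrt2$, hence $l^6=2N^2$, that is $k^3=2N^2$.

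It then remains to observe that $k^3=2N^2$ forces $k=2m^2$. Indeed $2\mid k$, say $k=2k_1$; substituting gives $8k_1^3=2N^2$, so $2\mid N$, say $N=2N_1$, and then $k_1^3=N_1^2$. Thus $k_1^3$ is a perfect square, which forces every exponent in the prime factorization of $k_1$ to be even; hence $k_1=m^2$ for some $m\in\mathbb N$, and therefore $l^2=k=2m^2$, i.e.\ $l=m\sqrt2$. The only points requiring genuine care are the two volume facts used above — that an integer-coordinate simplex has volume in $\tfrac16\mathbb Z$, and the precise constant $\tfrac1{6\sqrt2}$ in the regular-tetrahedron volume formula — after which the argument is routine. (One could alternatively read off that $l^2$ is even directly from Theorem~\ref{generalpar}, each face having squared side length $2d^2(m^2-mn+n^2)$, but the volume computation already delivers the stronger conclusion.)
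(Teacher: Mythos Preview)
Your argument is correct. The explicit tetrahedron for the ``if'' direction is fine, and the volume comparison for the ``only if'' direction is clean: the two volume formulas give $k^3=2N^2$, and your elementary factorisation argument correctly extracts $k=2m^2$ from this.

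Your route, however, is genuinely different from the one the paper has in mind. The paper does not prove this proposition here but cites it from \cite{eji1}, and the argument it refines at the start of Section~2 runs through the parametrization of equilateral triangles (Theorem~\ref{generalpar}): one face of the tetrahedron has side length $d\sqrt{2\zeta(m,n)}$, and computing the fourth vertex $R$ explicitly shows that its coordinates involve $\sqrt{\zeta(m,n)}$, forcing $\zeta(m,n)=k^2$ and hence $l=dk\sqrt2$. Your volume argument bypasses all of that machinery and is far more self-contained---it needs nothing beyond the determinant formula for simplex volume and the closed-form volume of a regular tetrahedron. What the paper's approach buys, on the other hand, is the explicit fourth-vertex formula (\ref{fourthpoint}), which is exactly what is needed for the main Theorem~\ref{main}; your method establishes the proposition but does not feed into the subsequent analysis of \emph{which} $(m,n)\in\Omega(k)$ and which sign choices yield integer $R$.
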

Given an $m\in \Bbb N$, we are interested in finding a
characterization of all such tetrahedra having side lengths equal to
$m\sqrt{2}$. Let us denote this class of objects by $\mathrm{T}_m$
and its subset of tetrahedra that have the origin as a vertex by
$\mathrm{T}_m^0$. Obviously we have $\mathrm{T}_m= \underset {v\in
\Bbb Z^3} {\cup } (\mathrm{T}_m^0+v)$ but this union is not a
partition. In what follows we will use the notation
$\mathrm{T}:=\underset {m\in \Bbb N} {\cup } (\mathrm{T}_m)$. This
latter union is, on the other hand, a partition.

Next we are going to concentrate on $\mathrm{T}_m^0$. It turns out
that $\mathrm{T}_m^0$ is numerous if $m$ is divisible by many prime
factors of the form $6k+1$, $k\in \Bbb N$, as one can see from the
following. First we recall Euler's $6k+1$ theorem (see \cite{r}, pp.
568 and \cite{g}, pp. 56).
\begin{proposition}\label{ejtypet} An integer $t$ can be written as $s^2-sr+r^2$ for some
$s,r\in \mathbb Z$ if and only if in the prime factorization of $t$,
$2$ and the primes of the form $6k-1$ appear to an even exponent.
\end{proposition}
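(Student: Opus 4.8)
The plan is to identify the form $q(s,r):=s^2-sr+r^2$ with the norm form of the ring of Eisenstein integers $\mathbb Z[\zeta]$, $\zeta=e^{2\pi i/3}$ (so $q(s,r)=|s+r\zeta|^2\ge 0$, and $4q(s,r)=(2s-r)^2+3r^2$), and to reduce the proposition to the behaviour of individual primes. The first ingredient is the Brahmagupta-type composition identity
\[
q(s_1,r_1)\,q(s_2,r_2)=q\bigl(s_1s_2-r_1r_2,\; s_1r_2+s_2r_1-r_1r_2\bigr),
\]
which is nothing but $|z_1z_2|^2=|z_1|^2|z_2|^2$ for $z_j=s_j+r_j\zeta$; it shows that the set $S$ of integers represented by $q$ is closed under multiplication. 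Since $q(1,0)=1$, every perfect square $q(k,0)=k^2$, and $q(1,-1)=3$ lie in $S$, for the sufficiency direction it is enough to prove that every prime $p\equiv 1\pmod 6$ lies in $S$: then, given a $t\ge 1$ in whose factorization $2$ and the primes $\equiv -1\pmod 6$ occur to even exponents, we write $t=k^2\cdot 3^{e}\cdot\prod_i p_i^{a_i}$ with all $p_i\equiv 1\pmod 6$ (absorbing the even powers of $2$ and of the primes $\equiv -1\pmod 6$ into $k^2$), and $t\in S$ follows from multiplicativity.

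For the necessity direction I would run an infinite descent. Suppose $t=q(s,r)$. If $2\mid t$, checking the four parity classes of $(s,r)$ shows that $q(s,r)$ is even only when $s$ and $r$ are both even; hence $4\mid t$ and $t/4=q(s/2,r/2)$, so $2$ divides $t$ to an even power. If an odd prime $p\equiv -1\pmod 6$ divides $t$, then from $4t=(2s-r)^2+3r^2$ we get $(2s-r)^2\equiv -3r^2\pmod p$; but by quadratic reciprocity $\left(\frac{-3}{p}\right)=\left(\frac{p}{3}\right)=-1$ for $p\equiv 2\pmod 3$, so $-3$ is a nonresidue, which forces $p\mid r$, then $p\mid 2s-r$, then $p\mid s$, and therefore $p^2\mid q(s,r)=t$ with $t/p^2=q(s/p,r/p)$ again in $S$. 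Iterating, every prime $\equiv -1\pmod 6$ and the prime $2$ occur in $t$ to an even exponent, which is the claim.

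The crux — and the step I expect to be the main obstacle — is showing that a prime $p\equiv 1\pmod 6$ is represented. I would prove this by Thue's lemma. Since $\left(\frac{-3}{p}\right)=1$, there is $\omega\in\mathbb Z$ with $\omega^2+\omega+1\equiv 0\pmod p$. Applying the pigeonhole principle to the $(\lfloor\sqrt p\rfloor+1)^2>p$ pairs $(x,y)$ with $0\le x,y\le\lfloor\sqrt p\rfloor$ yields $(x_0,y_0)\ne(0,0)$ with $|x_0|,|y_0|<\sqrt p$ and $x_0\equiv\omega y_0\pmod p$; then $q(x_0,-y_0)=x_0^2+x_0y_0+y_0^2\equiv y_0^2(\omega^2+\omega+1)\equiv 0\pmod p$, while $0<x_0^2+x_0y_0+y_0^2<3p$, so this value is $p$ or $2p$. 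The value $2p$ is excluded because $x^2+xy+y^2$ is even only when $x,y$ are both even, which would force $4\mid 2p$; hence $p=q(x_0,-y_0)\in S$. (Alternatively, one may invoke that $\mathbb Z[\zeta]$ is a Euclidean domain in which $3$ ramifies, the primes $\equiv 1\pmod 6$ split, and the primes $\equiv -1\pmod 6$ together with $2$ remain inert; reading off norms gives the proposition at once. This classical fact is in the references \cite{r} and \cite{g} cited above.) Combining the two directions completes the proof.
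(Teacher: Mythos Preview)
Your argument is correct. The multiplicativity via the norm identity, the descent for the primes $2$ and $p\equiv -1\pmod 6$ using $\left(\frac{-3}{p}\right)=\left(\frac{p}{3}\right)=-1$, and the Thue--pigeonhole construction for $p\equiv 1\pmod 6$ all check out; in particular the elimination of the value $2p$ by parity is clean, and the bound $x_0^2+x_0y_0+y_0^2<3p$ is justified since $|x_0|,|y_0|\le\lfloor\sqrt p\rfloor<\sqrt p$.

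As for comparison with the paper: there is nothing to compare. The paper does not prove this proposition at all; it merely \emph{recalls} it as ``Euler's $6k+1$ theorem'' with pointers to Rosen and Guy. Your write-up therefore supplies a self-contained proof where the paper offers only a citation. Your closing parenthetical---that one may alternatively read the result off from the splitting behaviour of rational primes in the Euclidean domain $\mathbb Z[\zeta]$---is exactly the spirit in which the paper treats the companion Proposition~\ref{numberofrepres}, where it suggests mimicking the Gaussian-integer proof of Gauss's two-squares counting theorem with Eisenstein integers in place of Gaussian ones.
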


The following result seems to be known but we could not find a
reference for it.
\begin{proposition} \label{numberofrepres}
Given $k=3^{\alpha}(a^2)b$ where $a$ does not contain any prime
factors of the form $6 \ell+1$, $b$ is the product of such primes,
say $b=p_1^{r_1}p_2^{r_2}\cdots p_j^{r_j}$, then the number of
representations $k=m^2-mn+n^2$, counting all possible pairs $(m,n)$
of integers that satisfy this equation, is equal to
$$6(r_1+1)(r_2+1)\cdots (r_j+1).$$
\end{proposition}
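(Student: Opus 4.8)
The plan is to pass to the ring of Eisenstein integers $\mathbb{Z}[\omega]$, where $\omega = e^{2\pi i/3}$, so that the quadratic form $m^2-mn+n^2$ becomes the norm form $N(m+n\omega) = (m+n\omega)(m+n\bar\omega)$. Counting representations $k = m^2-mn+n^2$ then becomes counting elements of $\mathbb{Z}[\omega]$ of norm $k$. Since $\mathbb{Z}[\omega]$ is a principal ideal domain (indeed Euclidean) with unit group of order $6$ (the sixth roots of unity $\pm 1, \pm\omega, \pm\omega^2$), the count of such elements equals $6$ times the number of ideals of norm $k$, i.e. $6$ times the number of ways to write the ideal $(k)$ as a product consistent with the splitting behaviour of each rational prime.

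The key steps, in order, are: (1) recall the splitting of rational primes in $\mathbb{Z}[\omega]$ — the prime $3$ ramifies as $(\sqrt{-3})^2 = (1-\omega)^2$ up to units; a prime $p \equiv 1 \pmod 3$ (equivalently $p = 6\ell+1$) splits as $\pi\bar\pi$ with $\pi,\bar\pi$ non-associate primes of norm $p$; and a prime $q \equiv 2 \pmod 3$ (i.e. $q = 6\ell-1$) remains inert with norm $q^2$. (2) Observe that Proposition~\ref{ejtypet} already tells us a representation exists iff $2$ and every prime $6k-1$ occur to an even power in $k$; in the notation $k = 3^\alpha a^2 b$ this is exactly the hypothesis, since $a$ is built from inert primes (and possibly other primes to guarantee the square) and contributes the ideal $(a)$, which has a \emph{unique} factorization into prime ideals with no freedom of choice. (3) Count ideals of norm $k$: the ramified part $3^\alpha$ contributes the single ideal $(1-\omega)^\alpha$; the inert part $a^2$ contributes the single ideal $(a)$; and each split prime power $p_i^{r_i}$ contributes exactly $r_i+1$ choices, namely $\pi_i^{t}\bar\pi_i^{\,r_i-t}$ for $t = 0,1,\dots,r_i$. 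Hence the number of ideals of norm $k$ is $(r_1+1)(r_2+1)\cdots(r_j+1)$, and multiplying by the $6$ units gives the claimed formula $6(r_1+1)\cdots(r_j+1)$.

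The main obstacle — really the only subtle point — is step (2): making precise why the "$a^2$" part contributes no additional multiplicity. One must argue that any element $m+n\omega$ of norm $k$ has its ideal divisible by exactly $(q)^{e}$ (not a proper ideal divisor) for each inert prime $q \mid a$ to the power $e = v_q(a)$, because the only ideal of $q$-power norm $q^{2e}$ is $(q^e)$; thus the inert primes are forced and contribute a factor $1$, not a factor depending on their exponents. Equivalently, one notes $N(m+n\omega) = k$ forces $q^e \mid \gcd(m,n)$ for each such $q$, and dividing out reduces to the case where $k$ has no inert prime factors. Once this reduction is in hand, the multiplicativity of the ideal count over the coprime pieces $3^\alpha$, $a^2$, and the $p_i^{r_i}$ finishes the argument. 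A secondary point worth a sentence is that distinct ideal factorizations give elements in distinct unit-orbits, so there is no overcounting when we multiply by $6$; this follows because two elements generate the same ideal iff they are associates.
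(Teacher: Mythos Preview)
Your proposal is correct and follows exactly the route the paper indicates: the paper does not give a detailed proof but merely says one should imitate Gauss's theorem on sums of two squares, replacing Gaussian integers by Eisenstein integers $\mathbb{Z}[\omega]$, and you have carried out precisely that argument (norm form, splitting of primes, ideal count times the $6$ units). There is nothing to compare beyond noting that you have supplied the details the paper omits.
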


For a proof of this proposition one can use arguments similar to
those in the proof of Gauss's Theorem about the number of
representations of a number as sums of two squares of integers
(\cite{r}). In this case one has to replace Gaussian integers with
Eisenstein integers, i.e. the ring of numbers of the form
$m+n\omega$, $m,n\in \Bbb Z$, where $\omega=\frac{-1+i\sqrt{3}}{2}$.
Related to Proposition~\ref{numberofrepres}, we found a conjecture
in a paper posted on the mathematics archives (see \cite{n}). This
article refers to the number of representations of a number as
$m^2+mn+n^2$ with $m,n\in \Bbb N$.

\section{Main results}

Let us begin by refining the argument used in the proof of
Proposition~5.3 in \cite{eji1}. That proposition referred to the
following figure representing a regular tetrahedron whose vertices
are given by the origin, $P(x,y,z)$, $Q(u,v,w)$ and $R$:

\begin{center}\label{figure1}
\epsfig{file=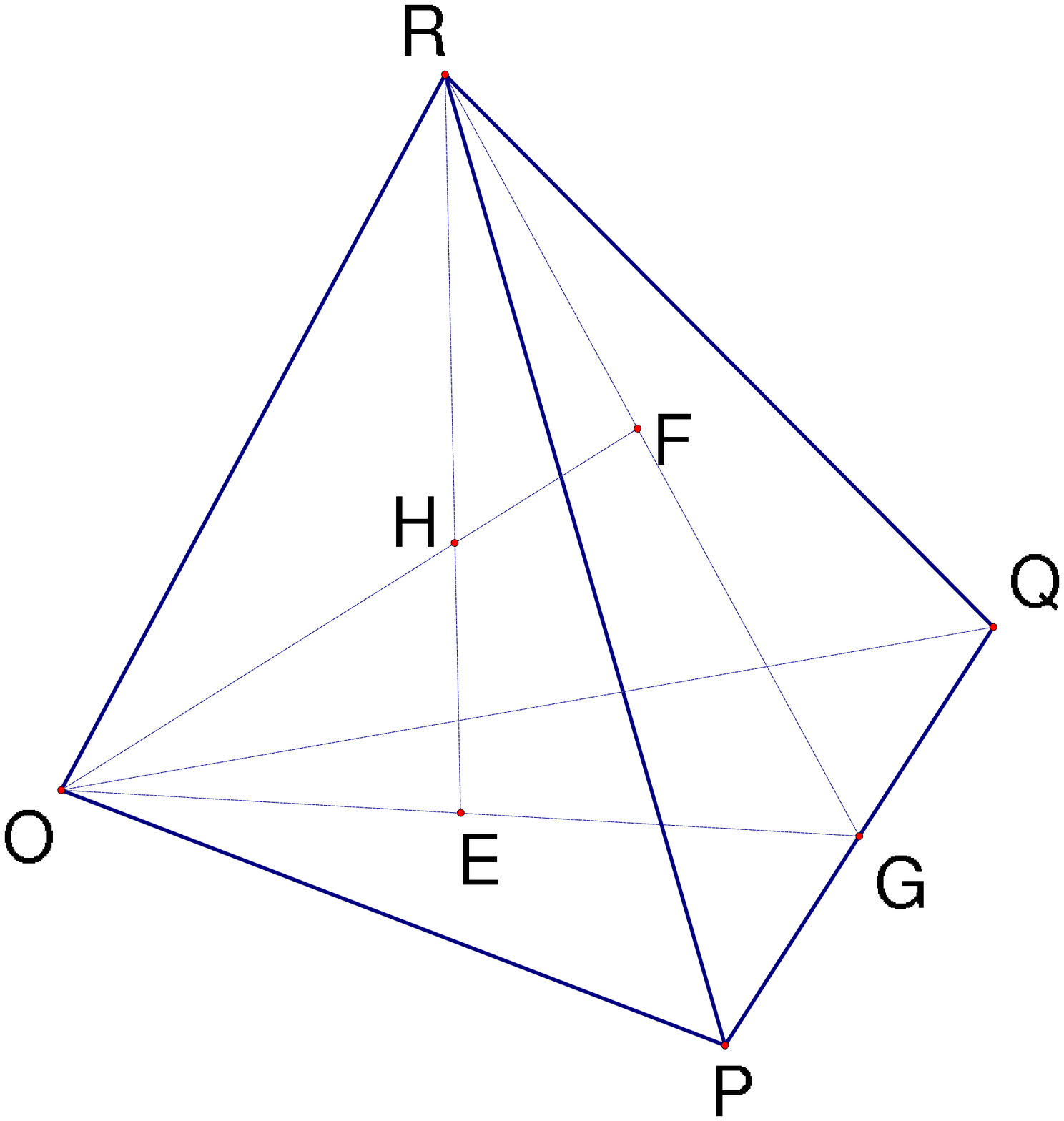,height=3in,width=3in}
\vspace{.1in}\\
{\it Figure 2: Regular tetrahedron} \vspace{.1in}
\end{center}

By the characterization of equilateral triangles in
Theorem~\ref{generalpar} we may assume that the coordinates of $P$
and $Q$ are given by (\ref{paramone}) and (\ref{paramtwo}).

If $E$ denotes the center of the face $\triangle OPQ$. Then from
Theorem~\ref{generalpar} we know that
$\frac{\overset{\rightarrow}{ER}}{|\overset{\rightarrow}{ER}| }=
\frac{(a,b,c)}{\sqrt{a^2+b^2+c^2}}=\frac{1}{\sqrt{3}d}(a,b,c)$ for
some $a,b,c,d,m,n\in\mathbb Z$, $gcd(a,b,c)=1$, $d$ odd satisfying
$a^2+b^2+c^2=3d^2$ and $l=d\sqrt{2(m^2-mn+n^2)}$. Let us denote
$m^2-mn+n^2$ by $\zeta(m,n)$. The coordinates of $E$ are
$(\frac{u+x}{3},\frac{y+v}{3},\frac{z+w}{3})$.

From the Pythagorean theorem one can find easily that $RE=l
\sqrt{\frac{2}{3}}$. Since
$\overset{\rightarrow}{OR}=\overset{\rightarrow}{OE}+\overset{\rightarrow}{ER}$,
the coordinates of $R$ must be given by

$$\left(\frac{u+x}{3}\pm
l\sqrt{\frac{2}{3}}\frac{1}{\sqrt{3}d}a,\frac{y+v}{3}\pm
l\sqrt{\frac{2}{3}} \frac{1}{\sqrt{3}d}b,\frac{z+w}{3}\pm
l\sqrt{\frac{2}{3}}\frac{1}{\sqrt{3}d}c\right)$$ or
$$\left(\frac{u+x}{3}\pm \frac{2a\sqrt{\zeta(m,n)}}{3},\frac{y+v}{3}\pm \frac{2b\sqrt{\zeta(m,n)}}{3},
\frac{z+w}{3}\pm \frac{2c\sqrt{\zeta(m,n)}}{3}\right).$$ Since these
coordinates are assumed to be integers we see that $\zeta(m,n)$ must
be a perfect square, say $k^2$, $k\in \Bbb N$.

It is worth mentioning that this leads to the Diophantine equation

\begin{equation}\label{eisensteintriple}
m^2-mn+n^2=k^2
\end{equation}

\noindent whose positive solutions are known as Eisenstein triples,
or Eisenstein numbers.  A primitive solution of
(\ref{eisensteintriple}) is one for which $gcd(m,n)=1$. These
triples can be characterized in a similar way as the Pythagorean
triples.

\begin{theorem}\label{chartheoremsixty} Every primitive solution of the Diophantine equation
(\ref{eisensteintriple}) is in one of the two forms:

\begin{equation}\label{parforsixty}
\begin{array}{c}
\displaystyle m=v^2-u^2,\ n=2uv-u^2 \ and\ k=u^2-uv+v^2, \ with \ v>u, \  or\\ \\
\displaystyle m=2uv-u^2,\ n=2uv-v^2 \ and\ k=u^2-uv+v^2, \ with\
2v>u>v/2,
\end{array}
\end{equation}

\noindent where $u,v\in \Bbb N$, $gcd(u,v)=1$, and $u+v\not \equiv
0$ (mod 3). Conversely, every triple given by one of the
alternatives in (\ref{parforsixty}) is a primitive solution of
(\ref{eisensteintriple}).
\end{theorem}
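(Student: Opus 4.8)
The statement is essentially an analogue of the classical parametrization of primitive Pythagorean triples, with the Eisenstein quadratic form $m^2-mn+n^2$ in place of $m^2+n^2$, so the natural approach is to work in the ring of Eisenstein integers $\mathbb{Z}[\omega]$, $\omega=\frac{-1+i\sqrt{3}}{2}$, which is a PID (indeed Euclidean) with norm $N(m+n\omega)=m^2-mn+n^2$. First I would rewrite $m^2-mn+n^2=k^2$ as $N(z)=k^2$ where $z=m+n\omega$, i.e. $z\bar z = k^2$. The primitivity condition $\gcd(m,n)=1$ translates (after a small check) into $z$ being coprime to $\bar z$ in $\mathbb{Z}[\omega]$ except possibly for the ramified prime $\lambda=\sqrt{-3}$ (up to units $\lambda = 1-\omega$ roughly, $N(\lambda)=3$); the side condition $u+v\not\equiv 0\pmod 3$ that appears in the conclusion is exactly what will force $\lambda\nmid z$, so I would first dispose of the divisibility-by-3 bookkeeping and reduce to the case $\gcd(z,\bar z)=1$.

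Then, from $z\bar z=k^2$ with $z,\bar z$ coprime and $\mathbb{Z}[\omega]$ a UFD with unit group $\{\pm1,\pm\omega,\pm\omega^2\}$, unique factorization gives $z = \varepsilon\, w^2$ for some unit $\varepsilon$ and some $w=u+v\omega\in\mathbb{Z}[\omega]$, with $k=N(w)=u^2-uv+v^2$. Expanding $\varepsilon w^2$ for each of the six units and reading off real/imaginary parts (equivalently, the coefficients in the $\{1,\omega\}$-basis) produces a finite list of candidate formulas for $(m,n)$ in terms of $(u,v)$; after absorbing sign changes and the symmetry $\omega\leftrightarrow\omega^2$ (complex conjugation) and the reflection $(u,v)\mapsto(v,u)$, these six collapse to the two displayed families in (\ref{parforsixty}). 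The inequality constraints ($v>u$ in the first family, $2v>u>v/2$ in the second) are just the conditions that pin down which representative of each orbit has $m,n$ of the intended signs / lies in the intended range, and $\gcd(u,v)=1$ is equivalent to $w$ being primitive, i.e. not divisible by a rational prime, which is forced by $z$ being (essentially) squarefree-coprime to its conjugate.

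For the converse direction I would simply substitute each parametrized triple back into $m^2-mn+n^2$ and verify algebraically that it equals $(u^2-uv+v^2)^2$ — a routine expansion — and then check that $\gcd(m,n)=1$ follows from $\gcd(u,v)=1$ together with $u+v\not\equiv0\pmod 3$, again by a short divisibility argument in $\mathbb{Z}[\omega]$ (any common prime divisor of $m+n\omega$ and its conjugate would have to be $\lambda$, excluded by the mod-3 hypothesis, or a rational prime, excluded by $\gcd(u,v)=1$).

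**Expected main obstacle.** The algebra over $\mathbb{Z}[\omega]$ is clean; the fiddly part is the bookkeeping that turns ``$z=\varepsilon w^2$ for one of six units, up to conjugation and sign'' into \emph{exactly} the two stated normal forms with \emph{exactly} those inequality ranges and no redundancy or omission. In particular one must be careful that the two families genuinely cover all primitive solutions (including the ``degenerate-looking'' ones and the role of the prime $3$, since $3 = -\omega^2\lambda^2$ ramifies and $m^2-mn+n^2=k^2$ can have $3\mid k$), and that the constraint $u+v\not\equiv 0\pmod3$ is both necessary and sufficient for primitivity on the parametrization side — this congruence is precisely the statement $\lambda\nmid w$, and checking it cleanly is where the proof needs the most care.
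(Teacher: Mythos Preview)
The paper does not give a proof of this theorem at all; immediately after the statement it reads ``We leave the proof of this theorem to the reader.'' So there is no argument in the paper to compare your proposal against.

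Your plan via the Eisenstein integers $\mathbb{Z}[\omega]$ is the natural one and is correct. One small simplification: your worry about the ramified prime $\lambda$ (equivalently, the case $3\mid k$) is in fact empty for \emph{primitive} solutions. If $3\mid k$ then $9\mid m^2-mn+n^2$; but $3\mid m^2-mn+n^2$ forces $m\equiv -n\pmod 3$, and then $m^2-mn+n^2\equiv 3n^2\pmod 9$, so $9\mid m^2-mn+n^2$ forces $3\mid n$ and hence $3\mid m$, contradicting $\gcd(m,n)=1$. (This is essentially the paper's Lemma~\ref{mandnequiv}(b).) Thus for primitive triples one always has $\lambda\nmid z$, hence $\gcd(z,\bar z)=1$, and the factorization $z=\varepsilon w^2$ goes through cleanly with no side cases. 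The condition $u+v\not\equiv 0\pmod 3$ on the parameter side is, as you say, exactly $\lambda\nmid w$, i.e.\ $3\nmid k$, and is therefore both necessary and sufficient for primitivity once $\gcd(u,v)=1$ is in place. After that, the only work left is the unit/conjugation bookkeeping you already flagged; for instance $w=v+u\omega$ with $\varepsilon=1$ yields the first family directly, and $\varepsilon=-\omega$ yields the second, with the remaining units and conjugation absorbed by sign changes and $(u,v)\leftrightarrow(v,u)$.
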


\noindent We leave the proof of this theorem to the reader.

Next, let us introduce the notation $\Omega(k):=\{(m,n)\in \Bbb Z^2
:\zeta(m,n)=k^2\}$. If the primes dividing $k$ are all of the form
$p\equiv 2$ (mod 3) then we have simply
$$\Omega(k)=\{(k,0), (k,k),
(0,k), (-k,0), (-k,-k), (0,-k)\}$$ but in general this set can be a
much larger as one can see from Proposition~\ref{numberofrepres}.
For instance if $k=7$ then one can check the nontrivial
representation $49=\zeta(8,3)$. For each solution $(m,n)\in
\Omega(k)$, in general, one can find eleven more by applying the
following transformations: $(m,n)\overset{\tau_1}{\rightsquigarrow}
(m-n,m)$, $(m,n)\overset{\tau_2}{\rightsquigarrow} (m,n-m)$, then
their permutations $(m,n)\overset{\tau_3}{\rightsquigarrow} (n,m)$,
$(m,n)\overset{\tau_4}{\rightsquigarrow} (m, m-n)$,
$(m,n)\overset{\tau_5}{\rightsquigarrow} (n-m,n)$, and finally the
reflections into the origin of all the above maps
$(m,n)\overset{\tau_6}{\rightsquigarrow} (-m,-n)$,
$(m,n)\overset{\tau_7}{\rightsquigarrow} (n-m,-m)$,
$(m,n)\overset{\tau_8}{\rightsquigarrow} (-m,m-n)$,
$(m,n)\overset{\tau_9}{\rightsquigarrow} (-n,-m)$,
$(m,n)\overset{\tau_{10}}{\rightsquigarrow} (-m, n-m)$,
$(m,n)\overset{\tau_{11}}{\rightsquigarrow} (m-n,-n)$.

Hence, the coordinates of $R$ depend on $(m,n)\in \Omega(k)$ and
two possible choices of signs:

\begin{equation}\label{fourthpoint}
\begin{array}{c}
 \displaystyle R= \displaystyle \left(
\frac{\begin{array}{c}
        (m_x+m_u)m \\
        -(n_x+n_u)n \\
       \pm 2ak
      \end{array}
}{3},\frac{\begin{array}{c}
        (m_y+m_v)m \\
        -(n_y+n_v)n \\
       \pm 2bk
      \end{array}}{3},
 \displaystyle \frac{\begin{array}{c}
        (m_z+m_w)m \\
        -(n_z+n_w)n \\
       \pm 2ck
      \end{array}}{3} \right),  \ (m,n)\in
\Omega(k).
\end{array}
\end{equation}

We would like to show that for every primitive solution of the
equation (\ref{planeeq}), every $k\in \Bbb N$, and every $(m,n)\in
\Omega(k)$ one has either integer coordinates in (\ref{fourthpoint})
or can choose the signs in order to accomplish this.  A primitive
solution of (\ref{planeeq}) is a solution that satisfies the
conditions $gcd(a,b,c)=1$. We begin with a few lemmas.

\begin{lemma}\label{primitivesol6kplusminus1} Every   primitive
solution $(a,b,c,d)$ of (\ref{planeeq}) must satisfy

(i) $a\equiv \pm 1$ (mod 6),   $b\equiv \pm 1$ (mod 6) and  $c\equiv
\pm 1$ (mod 6),

(ii) $a^2+b^2\equiv 2$ (mod 6).
\end{lemma}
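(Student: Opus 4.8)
The plan is to work modulo small moduli --- first $3$, then $4$ and $8$ --- and to exploit the primitivity hypothesis $\gcd(a,b,c)=1$ at each stage; everything reduces to a short finite case check.

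First I would reduce equation (\ref{planeeq}) modulo $3$. Since every square is congruent to $0$ or $1$ mod $3$, the relation $a^2+b^2+c^2\equiv 0\pmod 3$ forces either all of $a,b,c$ to be divisible by $3$ or none of them. The former is impossible because $\gcd(a,b,c)=1$, so $3\nmid abc$; equivalently $a\equiv\pm 1$, $b\equiv\pm 1$, $c\equiv\pm 1\pmod 3$.

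Next I would pin down the parities. Reducing modulo $4$ and using that squares are $\equiv 0$ or $1\pmod 4$: if $d$ were even then $a^2+b^2+c^2\equiv 0\pmod 4$, and checking the residues of a sum of three squares mod $4$ shows this happens only when $a,b,c$ are all even, again contradicting $\gcd(a,b,c)=1$. Hence $d$ is odd, so $3d^2\equiv 3\pmod 8$. Now reduce modulo $8$: odd squares are $\equiv 1$ and even squares are $\equiv 0$ or $4\pmod 8$, so if $j$ of the numbers $a,b,c$ are odd then $a^2+b^2+c^2\equiv j+4t\pmod 8$ for some integer $t$, i.e. the residue is $j$ or $j+4$ mod $8$. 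Matching this against $3$ with $0\le j\le 3$ forces $j=3$, so $a,b,c$ are all odd. Combining this with the mod $3$ conclusion, each of $a,b,c$ is odd and coprime to $3$, hence $\equiv\pm 1\pmod 6$, which is (i). For (ii), any integer $\equiv\pm 1\pmod 6$ has square $\equiv 1\pmod 6$ (indeed $1^2\equiv 5^2\equiv 1$), so $a^2+b^2\equiv 1+1\equiv 2\pmod 6$.

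The whole argument is a finite residue check, so there is no real obstacle; the only place requiring a little care is the mod $8$ step, where one must rule out a smaller value of $j$ combining with the even-square contributions ($0$ or $4$) to give residue $3$. Since those contributions are $\equiv 0$ or $4\pmod 8$ and $j\le 3$, the residue $3$ is attainable only with $j=3$, which closes that case.
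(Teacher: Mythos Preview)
Your proof is correct and follows essentially the same strategy as the paper: establish that $a,b,c$ are all odd and all coprime to $3$, then combine. The only real difference is that the paper simply asserts ``One can see that all numbers $a$, $b$, $c$ and $d$ must be odd since $\gcd(a,b,c)=1$'' without further comment, whereas you carry out this parity step carefully via reductions mod $4$ and mod $8$; your version is thus a fully fleshed-out variant of the paper's argument rather than a different approach.
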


\begin{proof} One can see that all numbers $a$, $b$, $c$ and $d$
must be odd since $gcd(a,b,c)=1$. Then an odd perfect square is
congruent to only $1$ or $3$ modulo $6$.  By way of contradiction if
we have for instance $a \equiv 3$ (mod 6) then $3$ divides
$a^2=3d^2-(b^2+c^2)$. This implies that $3$ divides $b^2+c^2$ which
is possible only if $3$ divides $b$ and $c$. Therefore that would
contradict the assumption $gcd(a,b,c)=1$. Hence, (i) is shown and
(ii) is just a simple consequence of (i).\end{proof}

Let us observe that $(ii)$ in Lemma~\ref{primitivesol6kplusminus1}
implies in particular that $q=a^2+b^2$ is coprime with $3$.

\begin{lemma}\label{mandnequiv}   Let $k\in \Bbb N$ and $(m,n)\in \Omega(k)$.

(a) Then $m$ and $n$ must satisfy $k\equiv \pm (m+n)$ (mod 3).

(b) If $k\equiv 0$ (mod 3) then $m\equiv n\equiv 0$ (mod 3).
\end{lemma}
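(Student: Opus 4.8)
The plan is to work modulo $3$ throughout, using the congruence constraints already established. First I would prove part (a). Since $(m,n)\in\Omega(k)$ means $m^2-mn+n^2=k^2$, I reduce this identity mod $3$. Observe the algebraic identity $m^2-mn+n^2 = (m+n)^2 - 3mn$, so $k^2 \equiv (m+n)^2 \pmod 3$. Hence $k^2 \equiv (m+n)^2 \pmod 3$, which forces $k \equiv \pm(m+n) \pmod 3$: indeed the squares mod $3$ are $\{0,1\}$, and $x^2 \equiv y^2 \pmod 3$ holds iff $x \equiv \pm y \pmod 3$ (check the cases $x\equiv 0$ forces $y\equiv 0$; $x\equiv \pm 1$ forces $y \equiv \pm 1$). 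This gives (a) immediately.

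For part (b), suppose $k \equiv 0 \pmod 3$. Then from part (a), $m+n \equiv 0 \pmod 3$, so $n \equiv -m \pmod 3$. Substituting into $m^2-mn+n^2 = k^2 \equiv 0 \pmod 9$ (since $9 \mid k^2$), I get $m^2 - m(-m) + (-m)^2 = 3m^2 \equiv 0 \pmod 9$, hence $m^2 \equiv 0 \pmod 3$, so $3 \mid m$, and then $3 \mid n$ as well since $n \equiv -m \pmod 3$. This completes (b). The key point is that $k \equiv 0 \pmod 3$ upgrades the congruence $k^2 \equiv 0$ to $9 \mid k^2$, which is what lets us conclude $3 \mid m^2$ rather than merely getting the mod-$3$ relation from (a).

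The main (mild) obstacle is simply being careful with the passage from "$x^2\equiv y^2 \pmod 3$" to "$x \equiv \pm y \pmod 3$" — it is worth stating explicitly since it is used in both parts — and making sure in (b) that one genuinely has $9 \mid k^2$ and not just $3\mid k^2$, which is where the hypothesis $k\equiv 0\pmod 3$ (as opposed to $k^2 \equiv 0 \pmod 3$) is essential. Everything else is a one-line computation with the identity $m^2-mn+n^2=(m+n)^2-3mn$.
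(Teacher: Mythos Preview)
Your argument is correct and follows essentially the same route as the paper; for (a) the paper reaches $k^2\equiv (m+n)^2\pmod 3$ via $4k^2=3m^2+(2m-n)^2$ together with $2m-n\equiv -(m+n)\pmod 3$, while your identity $m^2-mn+n^2=(m+n)^2-3mn$ gets there in one step and is arguably cleaner.

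One point in (b) needs tightening. You write ``substituting into $m^2-mn+n^2\equiv 0\pmod 9$, I get $m^2-m(-m)+(-m)^2=3m^2$,'' but you only know $n\equiv -m\pmod 3$, not $n=-m$, and a mod-$3$ congruence cannot in general be plugged into a mod-$9$ computation. It works here only because, writing $n=-m+3s$, one has $m^2-mn+n^2=3m^2-9ms+9s^2\equiv 3m^2\pmod 9$, so the error terms are genuinely divisible by $9$. This exact substitution is what the paper does (it sets $m=3t-n$ and computes $k^2=3(3t^2-3tn+n^2)$ before reducing); add that one line and your proof is complete.
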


\begin{proof} (a) This part follows immediately from the identities
$$
\begin{array}{c}
k^2\equiv 4k^2=4m^2-4mn+4n^2=3m^2+(2m-n)^2\equiv (2m-n)^2\equiv
(m+n)^2\ (mod 3).
\end{array}
$$

(b) For the second part let us observe that if $k\equiv 0$ (mod 3)
then by part (a) we must have  $m=3t-n$ for some $t\in \mathbb Z$.
This in turn gives $9k'^2=k^2=m^2-mn+n^2=3(3t^3-3nt+n^2)$ which
shows that $3$ divides $3t^3-3nt+n^2$. So, finally  $n$ must be
divisible by $3$ and then so is $m$. \end{proof}

\begin{center}
\epsfig{file=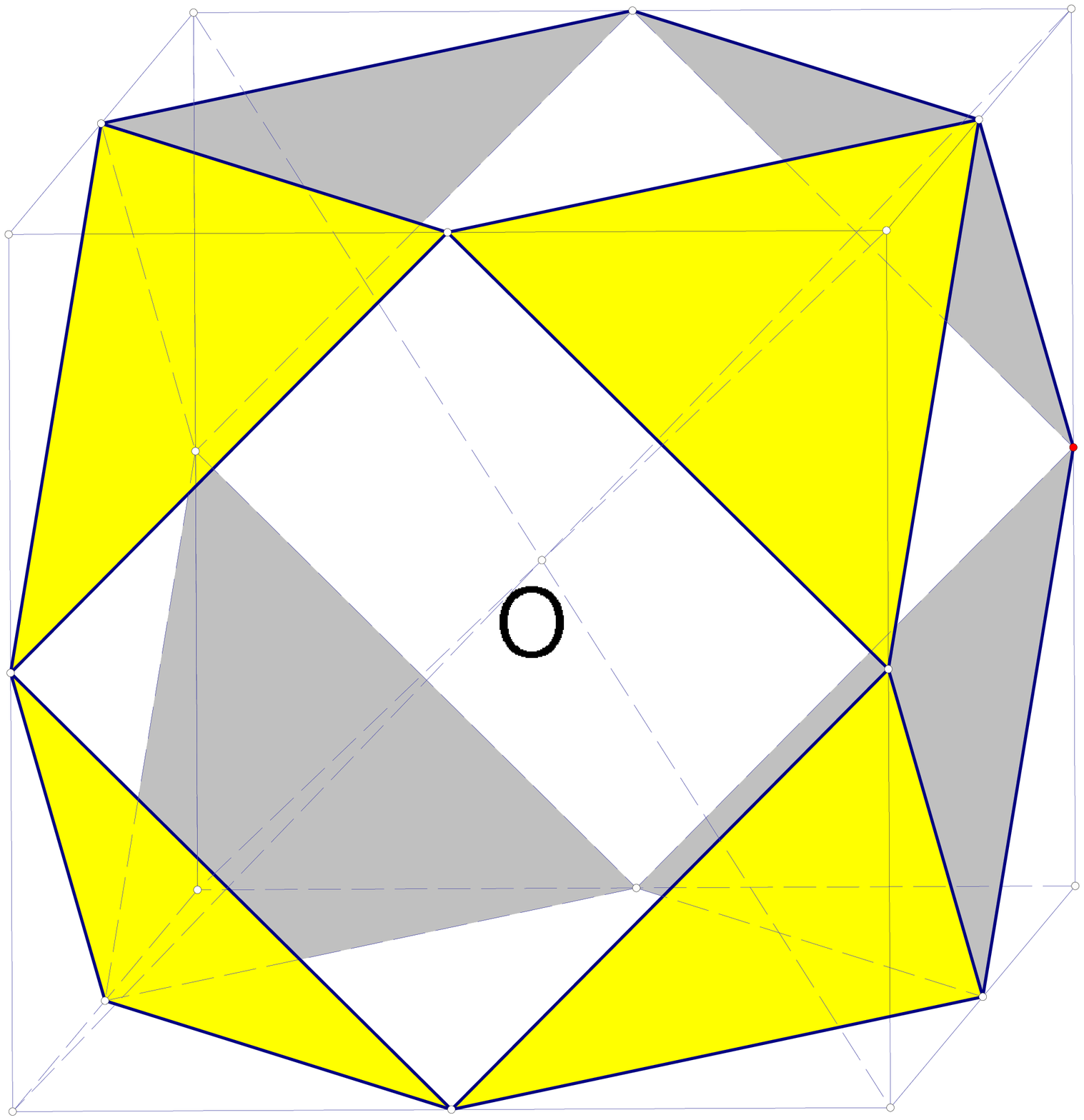,height=2.5in,width=3in}
\vspace{.1in}\\
{\it Figure 3: All regular tetrahedra in $\mathrm{T}_1^0$}
\vspace{.1in}
\end{center}
\begin{lemma}\label{randsequiv} Let $q=a^2+b^2$ with $(a,b,c,d)$ a primitive solution of (\ref{planeeq})  and
$r,s\in \Bbb Z$ a solution of the equation $2q=s^2+3r^2$. Then
$1\equiv a^2\equiv b^2\equiv c^2\equiv s^2$ (mod 3).
\end{lemma}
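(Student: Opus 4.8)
The plan is to reduce everything modulo $3$ and lean on Lemma~\ref{primitivesol6kplusminus1}. First I would record that, since $(a,b,c,d)$ is a primitive solution of (\ref{planeeq}), part (i) of Lemma~\ref{primitivesol6kplusminus1} gives $a\equiv\pm1$, $b\equiv\pm1$, $c\equiv\pm1$ (mod $6$), and hence in particular $a^2\equiv b^2\equiv c^2\equiv1$ (mod $3$). This already establishes the chain $1\equiv a^2\equiv b^2\equiv c^2$ (mod $3$), so the only thing left to prove is $s^2\equiv1$ (mod $3$).

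For that, I would first compute $q=a^2+b^2$ modulo $3$: by the previous paragraph (equivalently, by part (ii) of Lemma~\ref{primitivesol6kplusminus1}, which gives $a^2+b^2\equiv2$ (mod $6$)) we get $q\equiv2$ (mod $3$). Now reduce the defining relation $2q=s^2+3r^2$ modulo $3$: the term $3r^2$ vanishes, so $s^2\equiv2q\equiv2\cdot2=4\equiv1$ (mod $3$). Combining the two paragraphs yields $1\equiv a^2\equiv b^2\equiv c^2\equiv s^2$ (mod $3$), as claimed.

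There is essentially no real obstacle here; the point to be careful about is that the statement genuinely depends on the primitivity hypothesis $\gcd(a,b,c)=1$ — without it one could have $3\mid a$ and then $a^2\equiv0$ (mod $3$), so the appeal to Lemma~\ref{primitivesol6kplusminus1} (whose proof used primitivity) is the essential input. It is also worth noting as a byproduct that $s$ itself is then not divisible by $3$, which is the form in which this lemma will presumably be used when verifying the integrality of the parametrization (\ref{paramtwo}) and of the point $R$ in (\ref{fourthpoint}), since those expressions involve division by $q=a^2+b^2$ and interact with the congruence classes mod $3$ of $a$, $b$, $c$, $s$.
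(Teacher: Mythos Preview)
Your proof is correct and follows essentially the same route as the paper: invoke Lemma~\ref{primitivesol6kplusminus1} to get $a^2\equiv b^2\equiv c^2\equiv 1\pmod 3$, then reduce $2q=s^2+3r^2$ modulo $3$ to pin down $s^2$. The paper arrives at $s^2\equiv 1$ via the slight detour $2c^2=6d^2-2q\equiv -s^2\pmod 3$, whereas you compute $q\equiv 2$ directly; these are the same argument in marginally different packaging.
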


\begin{proof} Since $2c^2=6d^2-2q\equiv -s^2-3r^2\equiv -s^2$ (mod 3) then we
have $2c^2\equiv 2s^2$ (mod 3) which together with
Lemma~\ref{primitivesol6kplusminus1} gives the desired conclusion.
\end{proof}

For $a=b=c=1$, we have 8 elements in $\mathrm{T}_1^0$ as shown in
Figure~3, one for each triangular face of a cuboctahedron
(Archimedean solid $A_1$ or also known as a truncated cube):

 The number of tetrahedra in $\mathrm{T}_m^0$ in general depends on $m$. For instance, if $m=3$
then we calculated that $|\mathrm{T}_3^0|=40$. Let us denote by
$TO(m)$ the cardinality of $\mathrm{T}_m^0$.

In general, the faces of an element in $\mathrm{T}_m^0$ must have
normal vectors that satisfy (\ref{planeeq}).  But these equations
can be very different and one has to go search far enough to find
such an example. For instance, the tetrahedron $OABC$ where $A=(376,
-841, 2265)$, $B=(-1005, -2116, 701)$, $C=(1411, -1965, 356)$ has
the four faces with normal vectors

\[\begin{array}{c}
 (-187, 113, 73),\ \ satisfying\ \ \ \    187^2+113^2+73^2=3(133^2),  \\
(-343, -253, -37), satisfying\ \ 343^2+253^2+37^2=3(247)^2,\\
(19, 41, 151), satisfying\ \  19^2+41^2+151^2=3(91)^2\ \ and \ \ \\
(391, -2461, 1661), satisfying\ \  \ 391^2+2461^2+1661^2=3(1729)^2.
\end{array}
\]

\begin{theorem}\label{main} Every tetrahedra in
$\mathrm{T}_{\ell}^0$ can be obtained by taking as one of its faces
an equilateral triangle having the origin as a vertex and the other
two vertices given by (\ref{paramone}) and (\ref{paramtwo}) with
$a$, $b$, $c$ and $d$ odd integers satisfying (\ref{planeeq}) with
$d$ a divisor of $\ell$, and then completing it with the fourth
vertex as in (\ref{fourthpoint}) for some $(m,n)\in
\Omega({\ell}/d)$.

Conversely, if we let $a$, $b$, $c$ and $d$ be a primitive solution
of (\ref{planeeq}), let $k\in \Bbb N$ and $(m,n)\in \Omega(k)$, then
the coordinates of the point $R$ in (\ref{fourthpoint}) are

(a) all integers, if $k\equiv 0$ (mod 3) regardless of the choice of
signs or

(b) integers, precisely for only one choice of the signs if $k\not
\equiv 0$ (mod 3).
\end{theorem}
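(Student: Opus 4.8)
The plan is to prove the two implications separately, with the converse (parts (a) and (b)) carrying the real content; the forward direction is essentially a bookkeeping consequence of the material already assembled. For the forward direction, I would start from an arbitrary tetrahedron $T \in \mathrm{T}_\ell^0$. One of its faces is an equilateral triangle with a vertex at $O$, contained in some plane ${\cal P}_{a,b,c}$ with $(a,b,c,d)$ a primitive solution of (\ref{planeeq}); by Theorem~\ref{generalpar} the other two vertices are given by (\ref{paramone})--(\ref{paramtwo}) for suitable $m,n$, and the side length is $d\sqrt{2\zeta(m,n)}$. Since $T \in \mathrm{T}_\ell^0$, the side length is $\ell\sqrt 2$, so $d^2\zeta(m,n) = \ell^2$; in particular $d \mid \ell$ (because $\zeta(m,n)$ is an integer and equals $(\ell/d)^2$, forcing $\ell/d$ to be a positive integer), and $\zeta(m,n) = (\ell/d)^2$, i.e.\ $(m,n)\in\Omega(\ell/d)$. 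The fourth vertex $R$, being the apex over the centroid $E$ at height $\ell\sqrt{2/3}$ along $\pm(a,b,c)/(\sqrt3 d)$, is exactly the point in (\ref{fourthpoint}) with $k = \ell/d$. This gives the forward statement.

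For the converse, fix a primitive solution $(a,b,c,d)$ of (\ref{planeeq}), an integer $k$, and $(m,n)\in\Omega(k)$. The three coordinates of $3R$ in (\ref{fourthpoint}) are integers automatically (they are integer linear combinations of $m,n$ and $2ak,2bk,2ck$, using that the $m_\bullet,n_\bullet$ are integers by hypothesis in Theorem~\ref{generalpar}), so the whole question reduces to a congruence mod $3$: for which choice of signs is each of
\[
(m_x+m_u)m-(n_x+n_u)n \pm 2ak,\quad
(m_y+m_v)m-(n_y+n_v)n \pm 2bk,\quad
(m_z+m_w)m-(n_z+n_w)n \pm 2ck
\]
divisible by $3$? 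I would first handle the ``triangle part'': since $O,P,Q$ already have integer coordinates and $P+Q = 3E$ lies in ${\cal P}_{a,b,c}$, the vector $(P+Q)$ is a lattice point, and I want its reduction mod $3$. The cleanest route is to observe that $E$, $O$, $P$, $Q$ and $R$ are the centroid and vertices of a regular tetrahedron, so $3E = P+Q+O$ and also $R + 3E$ relates to a lattice point; more concretely I would show $P+Q \equiv$ (some explicit multiple of $(a,b,c)$) mod $3$. Using Lemma~\ref{randsequiv} ($s^2\equiv 1$, $a^2\equiv b^2\equiv c^2\equiv 1$ mod $3$) together with the explicit formulas (\ref{paramtwo}) and $q\equiv 2$ mod $3$ (from Lemma~\ref{primitivesol6kplusminus1}(ii), so $q$ is invertible mod $3$ and $q^{-1}\equiv 2$), I would reduce each of $m_x+m_u$, etc., modulo $3$ to something proportional to $(a,b,c)$. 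The key clean fact to extract: $(P+Q) \equiv \lambda(a,b,c) \pmod 3$ for some scalar $\lambda$ depending on $m,n,r,s$, because $P+Q$ is orthogonal to $(a,b,c)$ in $\mathbb R^3$ but mod $3$ the quadratic form $a^2+b^2+c^2\equiv 0$ makes $(a,b,c)$ isotropic, so "orthogonal to" does not prevent "parallel to."

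With that in hand the congruence becomes: is $\lambda(a,b,c) \pm 2k(a,b,c) \equiv 0 \pmod 3$ coordinatewise, i.e.\ is $(\lambda \pm 2k)(a,b,c)\equiv 0$? Since $\gcd(a,b,c)=1$ and each of $a,b,c$ is a unit mod $3$ (Lemma~\ref{primitivesol6kplusminus1}(i)), this holds iff $\lambda \pm 2k \equiv 0\pmod 3$, i.e.\ $\lambda \equiv \mp 2k \equiv \pm k \pmod 3$. Now I split on $k \bmod 3$. If $k\equiv 0$, then $\lambda \equiv \mp 2k$ holds for \emph{both} signs (both sides are $0$), giving part (a)—here I will need Lemma~\ref{mandnequiv}(b), which forces $m\equiv n\equiv 0$ mod $3$, to confirm that $\lambda$ is itself $\equiv 0$ mod $3$ so that the triangle part is divisible by $3$ on its own. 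If $k\not\equiv 0$, then $2k$ and $-2k$ are distinct nonzero residues mod $3$, so exactly one of the two sign choices can satisfy $\lambda \equiv \mp 2k$; and I must show that this one choice genuinely works, i.e.\ that $\lambda \equiv \pm k \pmod 3$ for the right sign—this is where Lemma~\ref{mandnequiv}(a), $k\equiv\pm(m+n)$ mod $3$, gets used to pin down $\lambda \bmod 3$ in terms of $m+n$, matching it to $\pm k$.

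The main obstacle I anticipate is the explicit mod-$3$ evaluation of $m_x+m_u$, $m_y+m_v$, $m_z+m_w$ (and the $n$-analogues) from the unwieldy formulas (\ref{paramtwo}): one has to clear the factor $1/q$ (legitimate since $q$ is a unit mod $3$), use $2q = s^2+3r^2 \Rightarrow 2q\equiv s^2\equiv 1$ hence $q\equiv 2$, substitute $a^2\equiv b^2\equiv c^2\equiv s^2\equiv 1$, and watch the halves $\tfrac12(\cdots)$—which are integers by hypothesis—carefully, perhaps by multiplying through by $2$ and tracking parity. The payoff is that after this reduction everything collapses to the single scalar $\lambda$ and the elementary case analysis above; so I would present the computation as a lemma ("$P+Q \equiv \lambda(a,b,c)\pmod 3$ with $\lambda \equiv \pm(m-n)$ or similar") and then give the short argument deducing (a) and (b) from it together with Lemmas~\ref{primitivesol6kplusminus1} and~\ref{mandnequiv}.
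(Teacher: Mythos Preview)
Your plan is correct and aligns with the paper's proof in substance: both arguments reduce the integrality of $R$ to a mod-$3$ congruence on the numerators in (\ref{fourthpoint}), and both invoke Lemmas~\ref{primitivesol6kplusminus1}--\ref{randsequiv} together with $q\equiv 2\pmod 3$ to evaluate those numerators. The forward direction is handled identically.

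The organizational difference is worth noting. You propose to package the mod-$3$ computation as a single lemma, ``$P+Q\equiv\lambda(a,b,c)\pmod 3$ for some scalar $\lambda$,'' and then run a one-line case split on $\lambda\pm 2k$. The paper instead works coordinate by coordinate: it evaluates the $z$-numerator first (where $m_z+m_w=\tfrac{3r-s}{2}$ and $n_z+n_w=\tfrac{3r+s}{2}$ make the reduction immediate), obtaining $2[(m_z+m_w)m-(n_z+n_w)n\pm 2ck]\equiv -s(m+n)\pm ck\pmod 3$; then it computes the $x$-numerator and, after multiplying by $c$, shows it is a unit multiple of the same expression $-s(m+n)\pm ck$; the remaining $y$-coordinate is disposed of by the identity $x_R^2+y_R^2+z_R^2=\ell^2$ (two of the three coordinates being integers and the sum of squares being an integer forces the third to be an integer as well). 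Your version is conceptually tidier---all three coordinates are handled at once, and the isotropy remark about $(a,b,c)$ modulo $3$ explains \emph{why} a scalar $\lambda$ should exist---while the paper's version trades the $y$-computation for the geometric shortcut. Either way the explicit scalar comes out to $\lambda\equiv cs(m+n)\pmod 3$ (not $m-n$ as you tentatively wrote), and then Lemma~\ref{mandnequiv}(a) gives $m+n\equiv\pm k$, so $\lambda\equiv\pm csk$; since $c,s$ are units mod $3$, exactly one sign yields $\lambda\equiv \mp 2k$ when $k\not\equiv 0$, and both signs work when $k\equiv 0$ via Lemma~\ref{mandnequiv}(b).
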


\noindent \begin{proof} The first part of the theorem follows from
the arguments at the beginning of this section. For the second part,
the case in which $k$ is a multiple of $3$ follows from
Lemma~\ref{mandnequiv}.

Let us look into a stituation in which $k\not \equiv 0$ (mod 3).
First we are going to analyze the third coordinate of $R$. Since
(\ref{paramtwo}) gives $m_z+m_w=\frac{r-s}{2}+r=\frac{3r-s}{2}$ and
$n_z+n_w=\frac{3r+s}{2}$ we see that

\begin{equation}\label{eqonz}
2[(m_z+m_w)m-(n_z+n_w)n\pm 2ck]\equiv -s(m+n)\pm ck\ (mod \ 3),
\end{equation}

\noindent which by Lemma~\ref{mandnequiv} and
Lemma~\ref{primitivesol6kplusminus1} give that

$$-s(m+n)\pm ck\equiv
\pm k(s\pm c)\equiv 0\ \ (mod \ 3)$$

\noindent only for one choice of signs. So, the last coordinate of
$R$ satisfies the statement of our theorem in part (b). If the
coordinates of $R$ are $(x_{R},y_{R},z_{R})$ we must have
$x_{R}^2+y_{R}^2+z_{R}^2=\ell^2$. So, it is enough to analyze just
one other coordinate of $R$. From (\ref{paramtwo}) we obtain that
$2q(m_x+m_u)=-db(3r+s)-ac(r-s)-2(rac+dbs)\equiv acs$ (mod 3), and
$2q(n_x+n_u)=-2(rac+dbs)-db(s-3r)-ac(r+s)\equiv -acs$ (mod 3).
Therefore,

$$2q[(m_x+m_u)m-(n_x+n_u)n\pm 2ak]\equiv acs(m+n)\pm qak\ (mod \ 3).$$

But $q\equiv -1$ (mod 3) by Lemma~\ref{primitivesol6kplusminus1},
and so
$$2q[(m_x+m_u)m-(n_x+n_u)n\pm 2ak]\equiv a(cs(m+n)\mp k)\ (mod \ 3).$$

\noindent Because $c^2\equiv 1$ (mod 3), we can multiply the above
relation by $c$ to get

$$2qc[(m_x+m_u)m-(n_x+n_u)n\pm 2ak]\equiv -a[-s(m+n)\pm ck]\ (mod \ 3),$$

\noindent which shows, based on (\ref{eqonz}), that
$(m_x+m_u)m-(n_x+n_u)n\pm 2ak$ is divisible by $3$ if and only if
$z_R$ is an integer. Hence we have proved the last part of
Theorem~\ref{main}. \end{proof}

The next figure shows how our tetrahedra look if we start from
equilateral triangles as bases contained in a plane and this is as
expected according to Theorem~\ref{main}.

\begin{center}
\epsfig{file=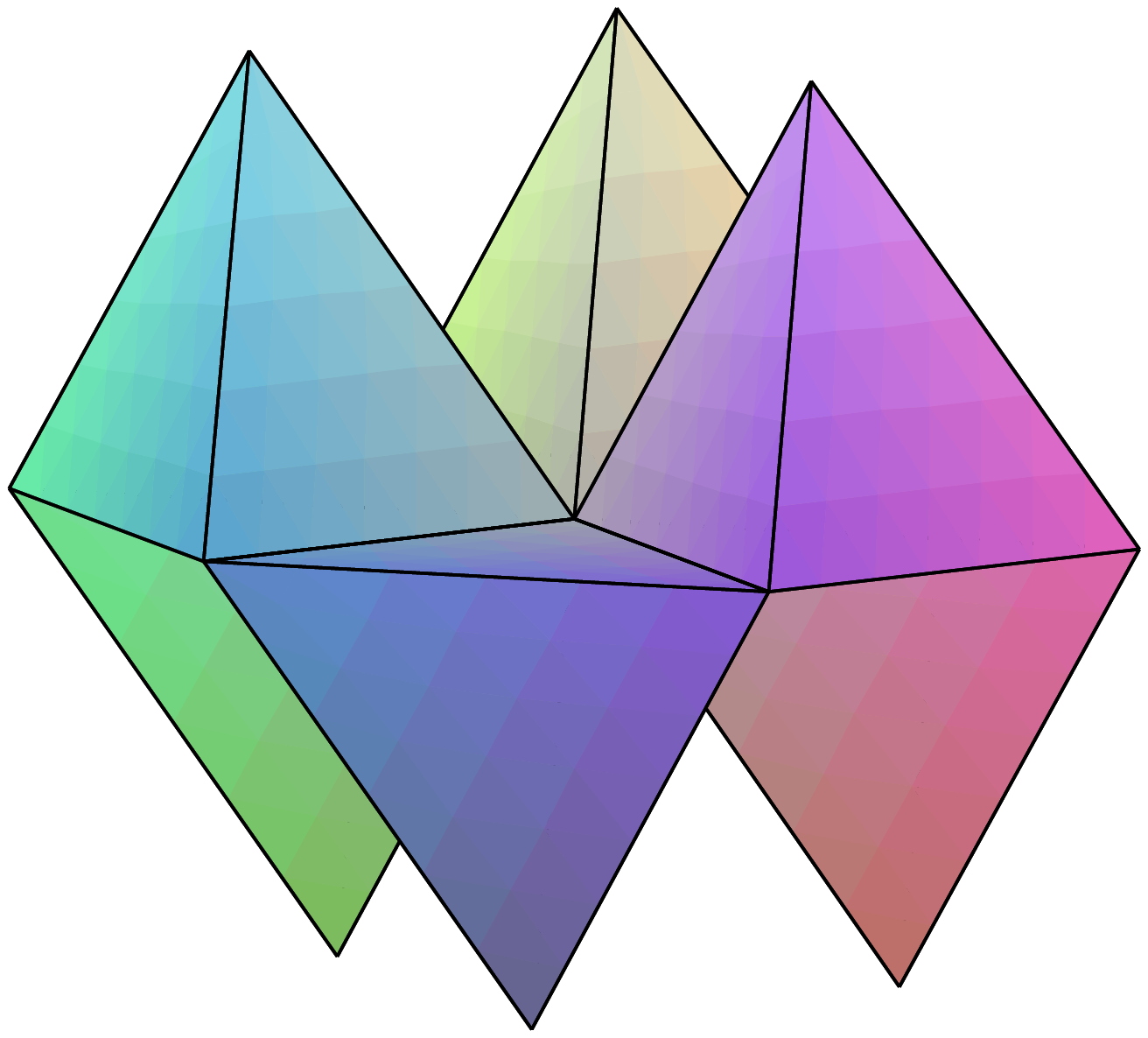,height=2.5in,width=3in}
\vspace{.1in}\\
{\it Figure 4: Regular tetrahedra in the case $k\not \equiv 0$ (mod
3)} \vspace{.1in}
\end{center}

{\bf Remark:} The alternating behavior that can be observed in
Figure~4 follows from Theorem~\ref{main} part (b). Indeed, we can
start with an equilateral triangle contained in a plane
$\mathfrak{P}$ which is one of the tiles and the generator of a
regular triangular tessellation.  By Theorem~\ref{main} part (b)
each such tile from this tessellation is the face of a regular
tetrahedron in $\mathrm T$ that is located in one and only one of
the sides of $\mathfrak{P}$. We denote the two half spaces by
$\mathfrak{P_{+}}$ and $\mathfrak{P_{-}}$. Let us take one such
tetrahedron, say in $\mathfrak{P_{+}}$, and translate this
tetrahedron such that the fourth vertex,  $R$, becomes the origin.
The other vertices give rise to three vertices of integer
coordinates in $\mathfrak{P_{-}}$ that are each the fourth vertex
$R'$ for three other tetrahedra living in $\mathfrak{P_{-}}$ as in
Figure~4. This type of translation can be repeated with one of the
tetrahedra in $\mathfrak{P_{-}}$ and one obtains the alternating
tetrahedra as Figure~4.

\begin{proposition}\label{normal}
For a regular tetrahedra in $\mathrm T_m$ each face lies in a plane
with a normal vector $n_i=(a_i,b_i,c_i)\in \mathbb {Z}^3$,
$i=1,2,3,4$, which satisfy
\begin{equation}\label{orth}
\begin{array}{c}
a_i^2+b_i^2+c_i^2=3d_i^2, \ \ d_i \ an\ odd\ integer\  dividing\ m, 1\le i\le 4\\ \\
a_ia_j+b_ib_j+c_ic_j+d_id_j=0, \ 1\le i<j\le 4
\end{array}
\end{equation}
\end{proposition}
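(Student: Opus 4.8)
The plan is to establish the two lines of (\ref{orth}) separately. The first, $a_i^2+b_i^2+c_i^2=3d_i^2$ with each $d_i$ an odd divisor of $m$, is a face-by-face consequence of the description of equilateral integer triangles recalled just before Theorem~\ref{generalpar}. The second, $a_ia_j+b_ib_j+c_ic_j+d_id_j=0$, is the only part needing a new (and short) argument: it encodes the fact that the angle between two face-normals of a regular tetrahedron is forced.

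Fix $T\in\mathrm{T}_m$ and one of its four faces $F$. Since $F$ is an equilateral triangle with vertices in $\mathbb Z^3$, after translating a vertex of $F$ to the origin (a move by a vector in $\mathbb Z^3$, changing no normal direction) the facts recalled before Theorem~\ref{generalpar} apply: $F$ lies in a plane with a primitive integer normal $(a,b,c)$ (so $\gcd(a,b,c)=1$), one has $a^2+b^2+c^2=3d^2$ with $d$ odd, and $F$ has side length $d\sqrt{2\,\zeta(m',n')}$ for suitable $m',n'\in\mathbb Z$ not both zero. Equating this with the common side length $m\sqrt2$ of $T$ gives $m^2=d^2\,\zeta(m',n')$ with $\zeta(m',n')$ a positive integer, whence $d^2\mid m^2$ and therefore $d\mid m$. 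Writing $(a_i,b_i,c_i)$ and $d_i>0$ for the data of the $i$-th face, the first line of (\ref{orth}) holds for $i=1,2,3,4$.

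For the second line, orient the four primitive normals consistently --- say $\nu_i:=N_i/|N_i|$ is the outward unit normal of the $i$-th face, where $N_i=(a_i,b_i,c_i)$ --- so that $|N_i|=\sqrt3\,d_i$ by the first line. Because $T$ is regular its four faces have equal area, so the elementary fact that the outward area-vectors of any tetrahedron sum to $\mathbf 0$ gives $\sum_{i=1}^4\nu_i=\mathbf 0$, hence $\sum_{i<j}\nu_i\cdot\nu_j=\tfrac12\bigl(\bigl|\sum_i\nu_i\bigr|^2-\sum_i|\nu_i|^2\bigr)=-2$; and since the symmetry group of the regular tetrahedron acts transitively on ordered pairs of distinct faces, the six numbers $\nu_i\cdot\nu_j$ all coincide, so each equals $-\tfrac13$ (equivalently, the dihedral angle equals $\arccos\tfrac13$, so two outward normals meet at angle $\arccos(-\tfrac13)$). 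Therefore $a_ia_j+b_ib_j+c_ic_j=N_i\cdot N_j=3d_id_j\cdot(-\tfrac13)=-d_id_j$, i.e. the second line of (\ref{orth}). The only points needing care are the consistent orientation of the four normals (all outward, or all inward --- either works, but one must not mix them) and the divisibility $d_i\mid m$ handled above; the geometric heart of the proposition is the single identity $\nu_i\cdot\nu_j=-\tfrac13$.
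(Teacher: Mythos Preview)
Your argument is correct and reaches the same endpoint as the paper --- the identity $\nu_i\cdot\nu_j=-\tfrac13$ for outward unit face-normals of a regular tetrahedron --- but you arrive there by a genuinely different route. The paper computes this angle explicitly: it introduces the centroids $E,F$ of two adjacent faces, the midpoint $G$ of the shared edge, and the centroid $H$ of the tetrahedron, applies Menelaus's theorem in the plane containing $O,R,E,F,G,H$ to locate $H$ on the segment $OF$, and then uses the law of cosines in triangle $ORH$ to obtain $\cos\widehat{OHR}=-\tfrac13$. Your argument replaces this hands-on calculation with two structural facts: the vanishing of the sum of outward area vectors of a closed polyhedron, and the transitivity of the symmetry group on pairs of faces. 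That is slicker and requires no auxiliary points or length computations; the paper's approach, on the other hand, is entirely self-contained and does not invoke the area-vector identity or a symmetry argument.

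On the first line of (\ref{orth}) your treatment is also somewhat more explicit than the paper's: you spell out why $d_i\mid m$ (from $m^2=d_i^2\,\zeta(m',n')$), whereas the paper folds this into ``what we have mentioned so far.'' Your remark that the four normals must be oriented consistently (all outward or all inward) for the second identity to hold with $d_i>0$ is a point the paper leaves implicit.
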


\begin{proof} We are going to refer to the Figure~2.
Let $E$ be the center of the face $OPQ$, $F$ be the center of the
face $RPQ$, $G$ the midpoint of the side $\overline{PQ}$, and $H$
the center of the tetrahedron $ORPQ$ ($O$ being the origin). Since
$O$, $R$, $E$, $F$, $G$, and $H$ are all coplanar, and $F$ and $E$
are the intersections of the medians on each corresponding face we
have $\frac{RF}{RG}=\frac{2}{3}$ and $\frac{EG}{EO}=\frac{1}{2}$. By
Menelaus's theorem $\frac{HO}{HF}\cdot \frac{RF}{RG} \cdot
\frac{EG}{EO} =1$. This gives $\frac{HO}{HF}=3$ and from here we
have $HO=HR=\frac{3}{4}\sqrt{\frac{2}{3}}\ell=\sqrt{\frac{3}{8}}
\ell$. Using the law of cosines in the triangle ORH gives:
$OR^2=2HR^2-2HR^2\cos \widehat {OHR}$ which gives
\begin{equation}\label{angle}
\cos \widehat {OHR}= -\frac{1}{3}.
\end{equation}

So, if the normal exterior unit vectors of the faces RPQ and OPQ
are, say $\frac{(a_1,b_1,c_1)}{\sqrt{3}d_1}$ and
$\frac{(a_2,b_2,c_2)}{\sqrt{3}d_2}$, using (\ref{angle}) gives
$a_1a_2+b_1b_2+c_1c_2+d_1d_2=0$. The rest of the identities follow
similarly or from what we have mentioned so far.\end{proof}

{\bf Remark:} If we consider the matrix

\begin{equation}\label{matriceaortogonala}
MT:=\frac{1}{2}\begin{bmatrix}
                   \frac{a_1}{d_1} & \frac{b_1}{d_1} & \frac{c_1}{d_1} & 1 \\
                    \frac{a_2}{d_2} & \frac{b_2}{d_2} & \frac{c_2}{d_2} & 1 \\
                    \frac{a_3}{d_3} & \frac{b_3}{d_3} & \frac{c_3}{d_3} & 1 \\
                    \frac{a_4}{d_4} & \frac{b_4}{d_4} & \frac{c_4}{d_4} & 1 \\
                 \end{bmatrix}
\end{equation}

\noindent the restrictions in (\ref{orth}) can be reformulated in
terms of $MT$ by requiring it to be orthogonal, i.e.
$MT(MT)^t=(MT)^tMT=I$. It is known that the set of orthogonal
matrices form a group. We also want to point out that in
(\ref{orth}) only two of the exterior normal vectors $n_i$ are
essential. The other two can be computed from the given ones.

An interesting question here is whether or not every two vectors
$(a,b,c)$ and $(a',b',c')$ which satisfy the conditions

\begin{equation}\label{twonormals}
a^2+b^2+c^2=3d^2,\ \ \ a'^2+b'^2+c'^2=3d'^2,\ \ \ aa'+bb'+cc'+dd'=0
\end{equation}

\noindent for some $d,d'\in \mathbb Z$, corresponds to a tetrahedron
in $\mathrm T$ containing two faces normal to the given vectors. It
turns out that the answer is a positive one and the proof of it
follows from Theorem~4 in \cite{eji1}.

Another corollary of Theorem~\ref{main} about solutions of a
particular case of the Diophantine system (\ref{twonormals}) is
given next.

\begin{corollary}\label{corolary}
For every odd integer $d>1$, the Diophantine system

\begin{equation}\label{system}
a^2+b^2+c^2=3d^2,\ a'^2+b'^2+c'^2=3d^2,\ aa'+bb'+cc'=-d^2
\end{equation}
always has  nontrivial solutions (one trivial solution is $a=b=c=d$
and $a'=b'=-d$, $c=d$.)
\end{corollary}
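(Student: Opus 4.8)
The plan is to produce a solution of (\ref{system}) by extracting two of the four face--normals of a single, carefully chosen regular tetrahedron: I would use Theorem~\ref{main} to \emph{build} the tetrahedron and Proposition~\ref{normal} to \emph{read off} the normals. Note that (\ref{system}) is exactly (\ref{twonormals}) in the case $d'=d$, which is what suggests this route.

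First I would fix an odd $d>1$ and secure a primitive solution $(a,b,c,d)$ of (\ref{planeeq}). Since $d$ is odd, $3d^2\equiv 3\pmod 8$, so $3d^2$ is not of the excluded form $4^k(8t+7)$; it therefore has a primitive representation $3d^2=a^2+b^2+c^2$ with $\gcd(a,b,c)=1$, and since the only way three squares can sum to $3\pmod 8$ is $1+1+1$, the numbers $a,b,c$ are automatically odd. Then I would invoke the converse part of Theorem~\ref{main} on this quadruple with $k=1$ and $(m,n)=(1,1)\in\Omega(1)$: for the correct choice of signs the point $R$ of (\ref{fourthpoint}) has integer coordinates, so $O$, the points $P,Q$ of (\ref{paramone})--(\ref{paramtwo}), and $R$ form a regular tetrahedron $OPQR\subset\mathbb Z^3$ of edge length $d\sqrt{2\zeta(1,1)}=d\sqrt2$; thus $OPQR\in\mathrm T_d^0$ and its face $OPQ$ lies in the plane ${\cal P}_{a,b,c}$.

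Next I would apply Proposition~\ref{normal} to $OPQR\in\mathrm T_d$: its four faces lie in planes with primitive integer normals $n_i=(a_i,b_i,c_i)$ obeying the relations (\ref{orth}), i.e. $a_i^2+b_i^2+c_i^2=3d_i^2$ with each $d_i$ an odd divisor of $d$, and $a_ia_j+b_ib_j+c_ic_j+d_id_j=0$ whenever $i\neq j$. The face $OPQ$, lying in ${\cal P}_{a,b,c}$, has primitive normal $\pm(a,b,c)$, hence parameter $d_1=d$. Picking any other face, with primitive normal $n_2=(a_2,b_2,c_2)$ and parameter $d_2\mid d$, I would set $(a',b',c'):=\frac{d}{d_2}\,n_2\in\mathbb Z^3$; then $a'^2+b'^2+c'^2=\frac{d^2}{d_2^2}\cdot 3d_2^2=3d^2$ and $aa'+bb'+cc'=\frac{d}{d_2}(aa_2+bb_2+cc_2)=\frac{d}{d_2}(-d\,d_2)=-d^2$, so $(a,b,c,a',b',c')$ solves (\ref{system}). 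For nontriviality I would note that every solution in the symmetry orbit of the trivial one has both of its vectors of the shape $(\pm d,\pm d,\pm d)$, whereas $(a,b,c)$ is not of that shape, since $\gcd(a,b,c)=1$ while $d>1$.

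The part I expect to be the main obstacle is the very first step: showing that a primitive solution of (\ref{planeeq}) really does exist for \emph{every} odd $d>1$ (rather than merely for some $d$), which rests on the congruence $3d^2\equiv3\pmod8$ together with the classical fact that the count of primitive ternary representations does not vanish for such numbers -- the same circle of ideas touched on after Proposition~\ref{numberofrepres}. The one small idea that makes the rest go through uniformly is the rescaling $n_2\mapsto\frac{d}{d_2}n_2$: it removes any need for two of the four faces to have the \emph{same} parameter, which, as the tetrahedron with edge $1729\sqrt2$ exhibited in the text shows, will in general fail.
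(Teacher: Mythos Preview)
Your proposal is correct and follows essentially the same route as the paper: start from a nontrivial primitive solution of $a^2+b^2+c^2=3d^2$, build the equilateral triangle with $(m,n)=(1,1)$, complete it to a tetrahedron in $\mathrm T_d$ via Theorem~\ref{main} with $k=1$, and rescale a second face-normal by $d/d_2$ to force both parameters to equal $d$. The only cosmetic differences are that the paper cites \cite{eji1} for the existence of the primitive solution (where you invoke the three-square theorem directly) and extracts the divisibility $d_2\mid d$ from Theorem~\ref{generalpar} rather than from Proposition~\ref{normal}; your explicit check of nontriviality via $\gcd(a,b,c)=1$ with $d>1$ is a point the paper leaves implicit.
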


\begin{proof} We have shown in  \cite{eji1} that the Diophantine equation
$a^2+b^2+c^2=3d^2$  always has nontrivial solutions. We then take
$m=n=1$ in Theorem~\ref{generalpar} to obtain an equilateral
triangle contained in the plane of normal vector $(a,b,c)$. By
Theorem~\ref{main}, case $k=1$, we can complete this equilateral
triangle to a regular tetrahedron in $\mathrm T$. Taking the normals
to the new faces will give four normals  and say
$(\hat{a},\hat{b},\hat{c})$ is one of them. Let us assume that
$gcd(\hat{a},\hat{b},\hat{c})=1$. We know that there is a $\hat{d}$
such that $ \hat{a}^2+\hat{b}^2+\hat{c}^2=3\hat{d}^2$. The side
length of this tetrahedron is, by our choice of $m$ and $n$, equal
to $d\sqrt{2}$. Therefore, from Theorem~\ref{generalpar} applied to
the plane normal to $(\hat{a},\hat{b},\hat{c})$, we get that
$d\sqrt{2}=\hat{d}\sqrt{2(u^2-uv+v^2)}$ for some $u,v\in \mathbb Z$,
which implies that $\hat{d}$ divides $d$. Hence we can adjust
$(\hat{a},\hat{b},\hat{c})$, if necessary, by a multiplicative
factor in order to satisfy (\ref{system}).
\end{proof}

\bigskip
\hrule
\bigskip

\noindent 2000 {\it Mathematics Subject Classification}: Primary
11A67;
Secondary 11D09, 11D04, 11R99, 11B99, 51N20.\\

\noindent {\it Keywords}: Diophantine equations, regular tetrahedra,
equilateral triangles, integers, parametrization, characterization.

\bigskip
\hrule
\bigskip

\noindent (Concerned with sequence \seqnum{A102698}.)

\bigskip
\hrule
\bigskip




\end{document}